\numberwithin{equation}{section}
\theoremstyle{plain}
\newtheorem{theorem}{Theorem}[section]
\newtheorem{definition}{Definition}[section]
\newtheorem{lemma}[theorem]{Lemma}
\newtheorem{proposition}[theorem]{Proposition}
\newtheorem{remark}[theorem]{Remark}
\newcommand{\beq}{\begin{equation}}
\newcommand{\eeq}{\end{equation}}
\newcommand{\beqs}{\begin{eqnarray*}}
\newcommand{\eeqs}{\end{eqnarray*}}
\newcommand{\beqn}{\begin{eqnarray}}
\newcommand{\eeqn}{\end{eqnarray}}
\newcommand{\beqa}{\begin{array}}
\newcommand{\eeqa}{\end{array}}
\def\phi{\varphi}
\begin{document}
\title[Prescribed shifted Gauss curvatures]{Horo-convex hypersurfaces with prescribed shifted Gauss curvatures in $\mathbb{H}^{n+1}$}

\author{Li Chen}
\address{Faculty of Mathematics and Statistics, Hubei Key Laboratory of Applied Mathematics, Hubei University,  Wuhan 430062, P.R. China}
\email{chenli@hubu.edu.cn}

\author{Kang Xiao}
\address{Faculty of Mathematics and Statistics, Hubei Key Laboratory of Applied Mathematics, Hubei University,  Wuhan 430062, P.R. China}
\email{dalangbeikang@163.com}

\author{Qiang Tu$^\ast$}
\address{Faculty of Mathematics and Statistics, Hubei Key Laboratory of Applied Mathematics, Hubei University,  Wuhan 430062, P.R. China}
\email{qiangtu@hubu.edu.cn}

\keywords{Shifted Gauss curvature; Horo-convex; Mong-Amp\'ere type
equation.}

\subjclass[2010]{Primary 35J96, 52A39; Secondary 53A05.}

\thanks{This research was supported by funds from Hubei Provincial Department of Education
Key Projects D20181003.}
\thanks{$\ast$ Corresponding author}

\begin{abstract}
In this paper, we consider prescribed shifted Gauss curvature
equations for horo-convex hypersurfaces in $\mathbb{H}^{n+1}$. Under
some sufficient condition, we obtain an existence result by the
standard degree theory based on the a prior estimates for the
solutions to the equations. Different from the prescribed Weingarten
curvature problem in space forms, we do not impose a sign condition
for radial derivative of the functions in the right-hand side of the
equations to prove the existence due to the horo-covexity of
hypersurfaces in $\mathbb{H}^{n+1}$.
\end{abstract}

\maketitle

\baselineskip18pt

\parskip3pt

 \section{Introduction}

Different from hypersurfaces in $\mathbb{R}^{n+1}$, there are four
different kinds of convexity for hypersurfaces in $\mathbb{H}^{n+1}$
\cite{Ale90, Ale93}. One of them is the horo-convexity which is
defined as follows.
\begin{definition}
A smooth hypersurface $M \subset \mathbb{H}^{n+1}$ is called
horo-convex if $\kappa_i(p)>1$ for all $p \in M$ and $1\leq i\leq
n$, where $\kappa=(\kappa_1, . . . , \kappa_n)$ are the principal
curvatures of $M\subset \mathbb{H}^{n+1}$ which are defined by
eigenvalues of the Weingarten matrix $\mathcal{W}=(h^{j}_{i})$.
\end{definition}
Geometrically, a hypersurface $M \subset \mathbb{H}^{n+1}$ is called
horo-convex if and only if it is convex by horospheres, where the
horospheres in hyperbolic space are hypersurfaces with constant
principal curvatures equal to $1$ everywhere. In \cite{Esp09}, the
authors suggest that horospheres can be naturally regarded in many
ways as hyperplanes in the hyperbolic space $\mathbb{H}^{n+1}$. This
fact implies the similarity between the horo-convixty in
$\mathbb{H}^{n+1}$ and the convexity in $\mathbb{R}^{n+1}$ from
geometric aspect .

Furthermore, this interesting formal similarities, or the general
similarities between the geomerty of horo-convex regions in
$\mathbb{H}^{n+1}$ (that is, regions which are given by the
intersection of a collection of horo-balls), and that of convex
Euclidean bodies have been deeply explored in \cite{Esp09} and
\cite{And20}. In particular, Andrews-Chen-Wei \cite{And20} introduce
the shifted Weingarten matrix
\begin{eqnarray*}
\mathcal{W}-I
\end{eqnarray*}
for the hypersurface $M \subset \mathbb{H}^{n+1}$ along the line
that the convexity of hypersurfaces in $\mathbb{R}^{n+1}$ can be
describes by the positive definite of their Weingarten matrix.
Clearly, $M \subset \mathbb{H}^{n+1}$ is horo-convex if and only if
the shifted Weingarten matrix $\mathcal{W}-I$ is positive definite.
Moreover, they \cite{And20} define the shifted principal curvatures
by
\begin{eqnarray*}
(\lambda_1, . . . , \lambda_n):=(\kappa_1 -1, . . ., \kappa_n -1),
\end{eqnarray*}
which are eigenvalues of the shifted Weingarten matrix
$\mathcal{W}-I$. Thus, similar to $k$-mean curvature, we can define
$k$-th shifted mean curvature for $1\leq k\leq n$ by
\begin{eqnarray*}
\sigma_k(\kappa-1):=\sigma_k(\kappa_1-1, ...,
\kappa_n-1)=\sum_{i_1<i_2< \cdot\cdot\cdot<
i_k}(\kappa_{i_1}-1)\cdot\cdot\cdot(\kappa_{i_k}-1).
\end{eqnarray*}
Since
\begin{eqnarray}\label{sum}
\sigma_k(\kappa-1)=\sum_{i=0}^{k}(-1)^{k-i}C^{n-k}_{n-i}\sigma_{i}(\kappa),
\end{eqnarray}
the $k$-th shifted mean curvature can also be regarded as the linear
combination of the $i$-mean curvature $\sigma_{i}(\kappa)$ of $M$
for $1\leq i\leq k$, where
$C^{n-k}_{n-i}=\frac{(n-i)!}{(n-k)!(k-i)!}$.

In \cite{And20, Hu20}, curvature flows for horo-convex hypersurfaces
in hyperbolic space with speed given by the function $f$ of the
shifted principal curvatures $\lambda_i$ have been extensively
studied by Andrews-Chen-Wei and Hu-Li-Wei respectively. As
applications, they prove some new geometric inequalities involving
the weighted integral of $k$-th shifted mean curvature for
horo-convex hypersurfaces. Later, Wang-Wei-Zhou \cite{Wang20} study
inverse shifted curvature flow in hyperbolic space.

The shifted curvatures can also be well be interpreted in the work
\cite{Esp09} by Espinar-G\'alvez-Mirain  on the extension of the
Christoffel problem \cite{Chr65, Fi} to space forms. They show that
the Christoffel problem can be naturally formulated in the context
of hypersurfaces $M^n\subset \mathbb{H}^{n+1}$ by introducing the
hyperbolic curvature radii of
 $M^n\subset \mathbb{H}^{n+1}$, defined as
\begin{eqnarray*}
\mathcal{R}_i:=\frac{1}{|\kappa_i-1|}.
\end{eqnarray*}
Clearly, the hyperbolic curvature radii of the horo-convex
hypersurface $M^n\subset \mathbb{H}^{n+1}$ are in fact the shifted
principle curvature radii. Thus, the shifted Gauss curvature can be
interpreted as
\begin{eqnarray*}
\sigma_n(\lambda_1, ...,
\lambda_n)=\frac{1}{\mathcal{R}_1\cdot\cdot\cdot \mathcal{R}_n}
\end{eqnarray*}
in a very simple way to the Gauss curvature of hypersurfaces in
Euclidean.

All the work above motivate us to study furtherly the geometry and
analysis of horo-convex hypersurfaces with the shifted curvatures in
$\mathbb{H}^{n+1}$. In this paper, we consider the the problem of
prescribed shifted curvatures for horo-convex hypersurfaces in
$\mathbb{H}^{n+1}$.

Let $M$ be a horo-convex hypersurface in $\mathbb{H}^{n+1}$. We can
choose a point $o$ inside $M$ such that $M$ is star-shaped with
respect to $o$, thus $M$ can be parametrized as a radial graph over
$\mathbb{S}^n$. So we consider geodesic polar coordinates centered
at $o$, the hyperbolic space $\mathbb{H}^{n+1}$ can be regarded as a
warped product space $[0, +\infty)\times \mathbb{S}^n$ with metric
\begin{eqnarray*}
\overline{g}=dr^2+sinh^2r g_{\mathbb{S}^n},
\end{eqnarray*}
where $ g_{\mathbb{S}^n}$ is the standard sphere metric. Thus, $M$
can be represented by
\begin{eqnarray*}
M=\{(x, r(x)): x \in \mathbb{S}^n\}.
\end{eqnarray*}
Let $h_{ij}$ be the second fundamental form of $M$ and $g$ be the
induced metric of $M$. Denote by $\widetilde{h}_{ij}=h_{ij}-g_{ij}$.
Thus
\begin{eqnarray*}
\det(\widetilde{h}_{ij}(X))=(\kappa_1-1)(\kappa_2-1)\cdot\cdot\cdot(\kappa_n-1)
\end{eqnarray*}
is the shifted Gauss curvature of $M$, $\kappa(X)=(\kappa_1(X), ...,
\kappa_n(X))$ are the principle curvatures of hypersurface $M$ at
$X$. In this paper, we study the problem of prescribed shifted Gauss
curvature
\begin{eqnarray}\label{Eq}
\det(\widetilde{h}_{ij}(X))=f(x, r(x)),
\end{eqnarray}
on a horo-convex hypersurface $M\subset \mathbb{H}^{n+1}$, where
$X=(x, r(x)) \in M$ and $f$ is given smooth functions in
$\mathbb{S}^{n}\times [0, +\infty)$. Clearly, the problem \eqref{Eq}
can also be regarded as the problem of the prescribed linear
combination of the Weingarten curvatures in view of \eqref{sum}.

We mainly get the following theorem.

\begin{theorem}\label{Main}
Let $n\ge3$ and $f(x, r)\in C^\infty(\mathbb{S}^{n}\times [0,
+\infty))$ be a positive function, assume that
\begin{eqnarray}\label{ASS1}
\coth r-1 \geq f(x, r)\quad \mbox{for} \quad r\geq r_2,
\end{eqnarray}
\begin{eqnarray}\label{ASS2}
\coth r-1 \leq f(x, r) \quad \mbox{for} \quad r\leq r_1.
\end{eqnarray}
Then there exists at least a smooth horo-convex, closed hypersurface
$M$ in $\{(x, r)\in \mathbb{H}^{n+1}: r_1\leq r\leq r_2, x \in
\mathbb{S}^n\}$ satisfies equation (\ref{Eq}).
\end{theorem}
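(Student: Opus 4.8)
The plan is to turn the geometric problem into a fully nonlinear elliptic PDE on $\mathbb{S}^n$ and then solve it by the degree method, with everything resting on a priori estimates on the admissible (horo-convex) class. Writing $M$ as the radial graph $\{(x,r(x)):x\in\mathbb{S}^n\}$ in the warped product $dr^2+\sinh^2 r\,g_{\mathbb{S}^n}$, I would first record the standard graph formulas for the induced metric $g_{ij}=r_ir_j+\sinh^2 r\,(g_{\mathbb{S}^n})_{ij}$ and for the Weingarten matrix $h^i_j$, so that the shifted Weingarten matrix $\widetilde{h}^i_j=h^i_j-\delta^i_j$ becomes an explicit expression in $r,\nabla r,\nabla^2 r$ (covariant derivatives on $\mathbb{S}^n$). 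Equation \eqref{Eq} then reads $\det(\widetilde{h}^i_j)=f(x,r)$, a Monge-Amp\`ere type equation that is elliptic exactly on the cone of horo-convex graphs, i.e. those with $\lambda_i=\kappa_i-1>0$. It is convenient to phrase the operator as $G[r]=\log\det(\widetilde{h}^i_j)-\log f$, which is concave in $\nabla^2 r$ and has positive definite linearization on this cone, so that the later regularity and degree steps apply.

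The heart of the proof is a family of a priori estimates uniform over the admissible class. For the $C^0$ estimate I would work at the extrema of $r$: at an interior maximum $\nabla r=0$ and $\nabla^2 r\le0$, so the shifted principal curvatures satisfy $\lambda_i\ge\coth r-1$, while symmetrically $\lambda_i\le\coth r-1$ at an interior minimum. Feeding these into $\det(\widetilde{h}^i_j)=f$ and comparing with the shifted Gauss curvature $(\coth r-1)^n$ of the geodesic sphere through the extremal point, one uses the barrier hypotheses \eqref{ASS1}--\eqref{ASS2} to confine the graph to the shell $r_1\le r\le r_2$. This is the conceptually new step: because horo-convexity forces every $\lambda_i>0$ and makes $\coth r-1$ the natural comparison quantity, the confinement should be read off from \eqref{ASS1}--\eqref{ASS2} alone, with no monotonicity (sign of $\partial_r f$) assumption as in the Euclidean or space-form prescribed-curvature problems; making the linear barriers interact correctly with the determinant comparison is the delicate point here. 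With $r$ trapped in a compact shell, I would obtain the gradient estimate by applying the maximum principle to an auxiliary quantity such as $\tfrac12|\nabla r|^2$ composed with a function of $r$ (equivalently a support function), the $C^0$ bound and the favorable sign of the horo-convex curvature terms doing the work.

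The main technical obstacle, as is typical for such equations, is the global $C^2$ (curvature) estimate, and especially the lower pinching that keeps the shifted principal curvatures bounded away from $0$ so that ellipticity cannot degenerate and horo-convexity is preserved along the whole deformation. I would bound $\lambda_{\max}$ from above by the maximum principle applied to a test quantity of the form $\log\lambda_{\max}$ plus a controllable function of $r$ and $|\nabla r|^2$, differentiating $G[r]=0$ twice and exploiting the concavity of $\log\sigma_n$ together with the ambient curvature terms of $\mathbb{H}^{n+1}$ that enter through the Gauss and Codazzi identities; the lower bound $\lambda_i\ge c>0$ then follows since $\prod_i\lambda_i=f$ is bounded below while the $\lambda_i$ are bounded above. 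Granting the $C^2$ bound the equation is uniformly elliptic and concave, so Evans--Krylov gives a uniform $C^{2,\alpha}$ estimate and Schauder bootstrapping gives uniform $C^\infty$ bounds. To finish, I would deform $f$ through a family $f_t$ preserving \eqref{ASS1}--\eqref{ASS2} to a rotationally symmetric model whose only solution is a geodesic sphere, use the uniform estimates to keep all solutions inside a fixed open set of horo-convex graphs in $C^{2,\alpha}(\mathbb{S}^n)$ bounded away from the degenerate boundary, and compute the nonzero degree of the model problem to produce a solution at $t=1$.
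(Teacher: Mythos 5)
Your proposal follows essentially the same route as the paper: reduction to a Monge--Amp\`ere type equation for the radial graph, $C^0$ bounds obtained at the extrema of $r$ from the barrier hypotheses \eqref{ASS1}--\eqref{ASS2}, a gradient bound coming from horo-convexity alone (no sign condition on $\partial_r f$), a maximum-principle curvature estimate whose lower pinching follows from the determinant being bounded below, Evans--Krylov plus Schauder, and a degree argument homotoping to a rotationally symmetric problem whose unique solution is a geodesic sphere with invertible linearization. The only differences are cosmetic choices of auxiliary functions: the paper bounds the mean curvature via $W=\log H-\log\langle V,\nu\rangle$ rather than $\log\lambda_{\max}$, and its gradient estimate is a purely pointwise argument at the maximum of $|Du|^2$ using $h^i_j>\delta^i_j$.
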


\begin{remark}
For the prescribed Weingarten curvatures problem, it is need usually
to impose a sign condition for radial derivative of $f$ in order to
derive a prior gradient estimate. However, we do not need such
condition in Theorem \ref{Main} due to the horo-covexity of the
hypersurface in $\mathbb{H}^{n+1}$.
\end{remark}

The prescribed Weingarten curvature equation
$\sigma_{k}(\kappa(X))=f(X)$ has been widely studied in the past two
decades. Such results were obtained for case of prescribed mean
curvature by Bakelman-Kantor \cite{Ba1, Ba2} and Treibergs-Wei
\cite{Tr}. For the case of prescribed Gaussian curvature by Oliker
\cite{Ol}. For general Weingarten curvatures by Aleksandrov
\cite{Al}, Firey \cite{Fi}, Caffarelli-Nirenberg-Spruck \cite{Ca}
for a general class of fully nonlinear operators $F$, including
$F=\sigma_k$ and $F=\frac{\sigma_k}{\sigma_l}$. Some results have
been obtained by Li-Oliker \cite{Li-Ol} on unit sphere, Barbosa-de
Lira-Oliker \cite{Ba-Li} on space forms, Jin-Li \cite{Jin} on
hyperbolic space, Andrade-Barbosa-de Lira \cite{An} on warped
product manifolds, Li-Sheng \cite{Li-Sh} for Riemannain manifold
equipped with a global normal Gaussian coordinate system.

For prescribed curvature problems in the case $f$ also depends on
the normal vector field $\nu$ along the hypersurface $M$, see
Caffarelli-Nirenberg-Spruck \cite{Ca}, Ivochkina \cite{Iv1, Iv2},
Guan-Li-Li \cite{Guan09}, Guan-Lin-Ma \cite{Guan12}, Guan-Guan
\cite{Guan02}, Guan-Ren-Wang \cite{Guan-Ren15}, Li-Ren-Wang
\cite{Li-Ren}, Chen-Li-Wang \cite{Chen} \cite{Ren} and Ren-Wang
\cite{Ren, Ren1}.

The organization of the paper is as follows.
In Sect. 2
we start with some preliminaries.
$C^0$, $C^1$ and $C^2$ estimates are given in Sect. 3.
In Sect. 4 we prove theorem \ref{Main}.

\section{Preliminaries}

\subsection{Setting and General facts}
For later convenience, we first state our conventions on Riemann
Curvature tensor and derivative notation. Let $M$ be a smooth
manifold and $g$ be a Riemannian metric on $M$ with Levi-Civita
connection $\nabla$. For a $(s, r)$-tensor field $\alpha$ on $M$,
its covariant derivative $\nabla \alpha$ is a $(s, r+1)$-tensor
field given by
\begin{eqnarray*}
&&\nabla \alpha(Y^1, .., Y^s, X_1, ..., X_r, X)
\\&=&\nabla_{X} \alpha(Y^1, .., Y^s, X_1, ..., X_r)\\&=&X(\alpha(Y^1, .., Y^s, X_1, ..., X_r))-
\alpha(\nabla_X Y^1, .., Y^s, X_1, ..., X_r)\\&&-...-\alpha(Y^1, ..,
Y^s, X_1, ..., \nabla_X  X_r).
\end{eqnarray*}
The coordinate expression of which is denoted by
$$\nabla \alpha=(\alpha_{k_{1}\cdot\cdot\cdot
k_{r}; k_{r+1}}^{l_{1}\cdot\cdot\cdot l_{s}}).$$ We can continue to
define the second covariant derivative of $\alpha$ as follows:
\begin{eqnarray*}
&&\nabla^2 \alpha(Y^1, .., Y^s, X_1, ..., X_r, X, Y)
=(\nabla_{Y}(\nabla\alpha))(Y^1, .., Y^s, X_1, ..., X_r, X).
\end{eqnarray*}
The coordinate expression of which is denoted by
$$\nabla^2 \alpha=(\alpha_{k_{1}\cdot\cdot\cdot
k_{r}; k_{r+1}k_{r+2}}^{l_{1}\cdot\cdot\cdot l_{s}}).$$ Similarly,
we can also define the higher order covariant derivative of
$\alpha$:
$$\nabla^3 \alpha=\nabla(\nabla^2 \alpha), \nabla^4 \alpha=\nabla(\nabla^3 \alpha), ... ,$$
and so on. For simplicity, the coordinate expression of the
covariant differentiation will usually be denoted by indices without
semicolons, e.g.,
$$u_{i}, \quad u_{ij} \quad \mbox{or} \quad
u_{ijk}$$ for a function $u: M\rightarrow \mathbb{R}$.

Let $X: M\rightarrow \mathbb{H}^{n+1}$ be an immersed hypersurface
with the standard metric $\overline{g}$ and Levi-Civita connection
$\overline{\nabla}$. Then£¬ $M$ can get the induced metric $g$ and
the Levil-Civita connection $\nabla$ of $g$. Pick a local coordinate
chart $\{x^i\}_{i=1}^{n}$ on $M$, then $e_i=\frac{\partial
X}{\partial x^i}$ form a local frame field of $X(M)$. Let $\nu$ be a
given unit normal and $h_{ij}$ be the second fundamental form $A$ of
the hypersurface with respect to $\nu$, that is
$$h_{ij}=-\langle\overline{\nabla}_{e_i}
\overline{\nabla}_{e_j}X, \nu\rangle_{\overline{g}}.$$

Recalling the Codazzi equation
\begin{equation}\label{Codazzi}
\nabla_{k}h_{ij}=\nabla_{j}h_{ik}
\end{equation}
and Simons' identity (see also \cite{And20})
\begin{eqnarray}\label{2rd}
\nabla_{(i}\nabla_{j)}h_{kl}
&=&\nabla_{(k}\nabla_{l)}h_{ij}+h_{ij}h_{km}h^{m}_{l}-h_{kl}h_{i}^{m}h_{mj}-g_{ij}h_{kl}+g_{kl}h_{ij},
\end{eqnarray}
where the brackets denote symmetrisation.

\subsection{Star-shaped hypersurfaces in $\mathbb{H}^{n+1}$}

Let $M$ be a closed hypersurface containing the origin in
$\mathbb{H}^{n+1}=[0, +\infty)\times \mathbb{S}^n$ with metric
\begin{eqnarray*}
\overline{g}=dr^2+sinh^2r g_{\mathbb{S}^n},
\end{eqnarray*}
where $ g_{\mathbb{S}^n}$ is the standard sphere metric. Then, we
have the following lemma (See \cite{Hu20}).

\begin{lemma}
Assume $M$ can be parametrized as a radial graph over $\mathbb{S}^n$
\begin{eqnarray*}
M=\{(x, r(x)): x \in \mathbb{S}^n\}.
\end{eqnarray*}
Let $\{x^1, ..., x^n\}$ be a local coordinate on $\mathbb{S}^n$,
$\{\partial_1, ..., \partial_n\}$ be the corresponding tangent
vector filed and $\partial_r$ be the radial vector field in
$\mathbb{R}^{n+1}$. $D_i\varphi=D_{\partial_i} \varphi$,
$D_iD_j\varphi=D^2 \varphi(\partial_i, \partial_j)$ denote the
covariant derivatives of $\varphi$ with respect to the round metric
$\sigma$ of $\mathbb{S}^n$. Then, the tangential vector takes the
form
$$e_i=\partial_i+r_i \partial_r.$$
The induced metric on $M$ has
$$g_{ij}=D_ir D_jr+\sinh^2r\sigma_{ij}$$
We also have the outward unit normal vector of $\Sigma$
$$\nu=\frac{1}{v}\bigg(\partial_r-\sinh^{-2} rD^{j}r\partial_{j}\bigg),$$
where $(\sigma^{ij})=(\sigma_{ij})^{-1}$ and $D^ir=\sigma^{ij}D_jr$.
Define a new function $u: \mathbb{S}^{n}\rightarrow\mathbb{R}$ by
\begin{equation}\label{02.01}
u(\theta)=\int_{c}^{r(\theta)}\frac{1}{\sinh s}ds.
\end{equation}
Then the induced metric on $M$ has
$$g_{ij}=\sinh^2r(D_iu D_ju+\sigma_{ij})$$
with the inverse
$$g^{ij}=\sinh^{-2}r(\sigma^{ij}-\frac{D^iuD^ju}{v^2}),$$
where
\begin{equation}\label{2.7}
v^2=1+\sigma^{ij}D_iuD_ju=1+\mid Du\mid^2.
\end{equation}
Let $h_{ij}$ be the second fundamental form of $M\subset
\mathbb{H}^{n+1}$ in term of the tangential vector fields $\{e_i,
..., e_n\}$. Then,
$$h_{ij}=\frac{\sinh r}{v}\bigg(\cosh r(D_iuD_ju+\sigma_{ij})-D_{i}D_{j}u\bigg)$$
and
\begin{equation}\label{2.07}
h^{i}_{j}=\frac{1}{v \sinh r }(\cosh
r\delta^{i}_{j}-\widetilde{g}^{ik}D_jD_k u),
\end{equation}
where $\widetilde{g}^{ij}=\sigma^{ij}-\frac{D^i u D^j u}{v^2}$.
\end{lemma}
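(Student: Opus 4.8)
The plan is to read off every identity by direct computation from the radial graph $X(x)=(x,r(x))$, deferring the change of variable \eqref{02.01} to the point where it simplifies the formulas. First I differentiate $X$ along the sphere directions: moving along $\partial_i$ shifts the radial coordinate at rate $r_i=D_ir$, so $e_i=\partial_i+r_i\partial_r$ with no further work. Restricting $\overline{g}=dr^2+\sinh^2 r\,g_{\mathbb{S}^n}$ to this frame and using $\overline{g}(\partial_r,\partial_r)=1$, $\overline{g}(\partial_r,\partial_i)=0$ and $\overline{g}(\partial_i,\partial_j)=\sinh^2 r\,\sigma_{ij}$ gives $g_{ij}=D_ir\,D_jr+\sinh^2 r\,\sigma_{ij}$ at once. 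For the normal I make the ansatz $\nu=\alpha(\partial_r-\sinh^{-2}r\,D^jr\,\partial_j)$, whose orthogonality to every $e_i$ reduces to the one-line cancellation $r_i-\sigma^{jk}D_kr\,\sigma_{ji}=0$; imposing $\overline{g}(\nu,\nu)=1$ then fixes $\alpha=1/v$ with $v^2=1+\sinh^{-2}r\,|Dr|_\sigma^2$.

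At this stage I introduce \eqref{02.01}, which yields $D_ir=\sinh r\,D_iu$ and hence $\sinh^{-2}r\,|Dr|_\sigma^2=|Du|^2$, so that the factor above becomes exactly $v^2=1+|Du|^2$ as in \eqref{2.7}. Substituting $D_ir=\sinh r\,D_iu$ collapses the metric to the conformally rescaled rank-one perturbation $g_{ij}=\sinh^2 r(\sigma_{ij}+D_iu\,D_ju)$, and its inverse is then immediate from the Sherman--Morrison formula: the inverse of $\sigma_{ij}+D_iu\,D_ju$ is $\sigma^{ij}-D^iu\,D^ju/(1+|Du|^2)$, giving $g^{ij}=\sinh^{-2}r\,\widetilde{g}^{ij}$ with $\widetilde{g}^{ij}=\sigma^{ij}-D^iu\,D^ju/v^2$.

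The remaining and by far most laborious step is the second fundamental form $h_{ij}=-\overline{g}(\overline{\nabla}_{e_i}\overline{\nabla}_{e_j}X,\nu)=-\overline{g}(\overline{\nabla}_{e_i}e_j,\nu)$, where the sign is fixed by the outward orientation (so that a geodesic sphere gives $h^i_j=\coth r\,\delta^i_j$, a useful consistency check since horospheres then have curvatures $1$). Here I would first record the nonvanishing Christoffel symbols of the warped product metric, $\overline{\Gamma}^r_{ij}=-\sinh r\cosh r\,\sigma_{ij}$, $\overline{\Gamma}^i_{rj}=\coth r\,\delta^i_j$ and $\overline{\Gamma}^k_{ij}={}^{\sigma}\Gamma^k_{ij}$, then expand $\overline{\nabla}_{e_i}e_j=\overline{\nabla}_{\partial_i+r_i\partial_r}(\partial_j+r_j\partial_r)$ and pair with $\nu$. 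The main obstacle is the bookkeeping in this expansion together with converting the sphere Hessian of $r$ into that of $u$ through $\sinh r\,D_iD_ju=D_iD_jr-\coth r\,D_ir\,D_jr$; after this substitution the terms quadratic in $Dr$ reorganize and one obtains $h_{ij}=\frac{\sinh r}{v}(\cosh r(\sigma_{ij}+D_iu\,D_ju)-D_iD_ju)$. Finally, raising an index with $g^{ij}=\sinh^{-2}r\,\widetilde{g}^{ij}$ and using that $\widetilde{g}$ is by construction the inverse of $\sigma_{ij}+D_iu\,D_ju$, the $\cosh r$ term contracts to $\cosh r\,\delta^i_j$ and produces \eqref{2.07}, namely $h^i_j=\frac{1}{v\sinh r}(\cosh r\,\delta^i_j-\widetilde{g}^{ik}D_jD_ku)$.
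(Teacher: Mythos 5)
Your proposal is correct, and all the computational checkpoints verify: the orthogonality cancellation for the normal ansatz, the identification $v^2=1+\sinh^{-2}r\,|Dr|_\sigma^2=1+|Du|^2$ after the substitution $D_ir=\sinh r\,D_iu$, the Sherman--Morrison inversion of $\sigma_{ij}+D_iu\,D_ju$, the warped-product Christoffel symbols, and the Hessian conversion $\sinh r\,D_iD_ju=D_iD_jr-\coth r\,D_ir\,D_jr$, which together yield exactly $h_{ij}=\frac{\sinh r}{v}\bigl(\cosh r(\sigma_{ij}+D_iuD_ju)-D_iD_ju\bigr)$ and \eqref{2.07}. Note, however, that the paper does not prove this lemma at all: it simply cites \cite{Hu20}, so there is no ``paper proof'' to match against. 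What your argument buys is a self-contained verification of the standard graph formulas (essentially reconstructing what \cite{Hu20} and \cite{Guan15} do), including the sign/orientation convention check via geodesic spheres ($h^i_j=\coth r\,\delta^i_j$, so horospheres have curvature $1$), which is genuinely useful here since the paper's definition $h_{ij}=-\langle\overline{\nabla}_{e_i}\overline{\nabla}_{e_j}X,\nu\rangle_{\overline g}$ combined with the outward normal is exactly the convention under which horo-convexity means $\kappa_i>1$. The only place where your write-up is a sketch rather than a proof is the expansion of $\overline{\nabla}_{e_i}e_j$ and its pairing with $\nu$; carrying it out gives $h_{ij}=\frac{1}{v}\bigl(\sinh r\cosh r\,\sigma_{ij}+2\coth r\,D_irD_jr-D_iD_jr\bigr)$, after which your substitution indeed reorganizes the quadratic terms as claimed, so the gap is purely one of bookkeeping, not of ideas.
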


Consider the function
\begin{eqnarray*}
\Lambda(r)=\int_{0}^{r}\sinh s d s
\end{eqnarray*}
and the vector field
\begin{eqnarray*}
V=\sinh r\partial_r,
\end{eqnarray*}
which is a conformal killing field in $\mathbb{H}^{n+1}$. Then, we
need the following lemma for $\Lambda$ and the support function
$\langle V, \nu\rangle$ of the hypersurface $M\subset
\mathbb{H}^{n+1}$.
\begin{lemma}\label{supp}
\begin{eqnarray*}
\nabla_i\nabla_j \Lambda=coshr g_{ij}-h_{ij}\langle V, \nu\rangle
\end{eqnarray*}
and
\begin{eqnarray*}
\langle V, \nu\rangle_{ij}=\cosh rh_{ij}+\sinh rg^{pq}h_{ij;
p}\nabla_{q} r-h_{im}h^{m}_{j}\langle V, \nu\rangle.
\end{eqnarray*}
\end{lemma}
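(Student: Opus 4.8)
The plan is to recognize that $V$ is the ambient gradient of $\Lambda$ and to exploit the warped-product structure, which makes $V$ a conformal Killing field with an especially clean Hessian. Since $r$ is the geodesic distance from $o$ and $\overline{g}=dr^2+\sinh^2 r\, g_{\mathbb{S}^n}$, one has $\overline{\nabla} r=\partial_r$, hence $\overline{\nabla}\Lambda=\Lambda'(r)\partial_r=\sinh r\,\partial_r=V$. Computing the Hessian of a radial function in this warped product, the radial-radial component is $\Lambda''=\cosh r$ while the spherical component is $\Lambda'\coth r=\sinh r\coth r=\cosh r$ and the mixed components vanish, so that
$$\overline{\nabla}^2\Lambda=\cosh r\,\overline{g},$$
i.e. $\overline{\nabla}_X V=\cosh r\,X$ for every vector field $X$. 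This identity is the engine behind both formulas.

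For the first formula I would restrict $\Lambda$ to $M$ and relate its intrinsic Hessian to the ambient one via the Gauss formula $\overline{\nabla}_{e_i}e_j=\nabla_{e_i}e_j-h_{ij}\nu$, consistent with the sign convention $h_{ij}=-\langle\overline{\nabla}_{e_i}\overline{\nabla}_{e_j}X,\nu\rangle$ fixed earlier. This yields
$$\nabla_i\nabla_j\Lambda=\overline{\nabla}^2\Lambda(e_i,e_j)-h_{ij}\,\nu(\Lambda)=\cosh r\,g_{ij}-h_{ij}\langle V,\nu\rangle,$$
using $\overline{\nabla}^2\Lambda(e_i,e_j)=\cosh r\,g_{ij}$ and $\nu(\Lambda)=\langle\overline{\nabla}\Lambda,\nu\rangle=\langle V,\nu\rangle$.

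For the second formula I would first differentiate the support function once. Writing $\chi=\langle V,\nu\rangle$ and using the conformal Killing identity together with the Weingarten relation $\overline{\nabla}_{e_i}\nu=h_i^k e_k$, the term $\langle\overline{\nabla}_{e_i}V,\nu\rangle=\cosh r\langle e_i,\nu\rangle$ vanishes, leaving
$$\nabla_i\chi=\langle V,\overline{\nabla}_{e_i}\nu\rangle=h_i^k\langle V,e_k\rangle=h_i^k\nabla_k\Lambda,$$
since $\langle V,e_k\rangle=e_k(\Lambda)=\nabla_k\Lambda$. Differentiating once more and substituting the first formula for $\nabla_j\nabla_k\Lambda$ gives
$$\nabla_j\nabla_i\chi=(\nabla_j h_i^k)\nabla_k\Lambda+h_i^k\bigl(\cosh r\,g_{jk}-h_{jk}\chi\bigr),$$
where the second group produces $\cosh r\,h_{ij}-h_{im}h_j^m\chi$. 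In the first group I would write $\nabla_k\Lambda=\sinh r\,\nabla_k r$ and then invoke the Codazzi equation \eqref{Codazzi} to replace $\nabla_j h_{ik}$ by the symmetric $\nabla_k h_{ij}=h_{ij;k}$, turning it into $\sinh r\,g^{pq}h_{ij;p}\nabla_q r$. Collecting the three terms yields precisely the claimed expression for $\langle V,\nu\rangle_{ij}$.

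The computations are routine once the conformal Killing identity is in hand; the only real care is bookkeeping the sign conventions in the Gauss and Weingarten formulas, so that the second fundamental form enters correctly, and the proper use of Codazzi to convert the covariant derivative of the Weingarten tensor into the fully symmetric $h_{ij;p}$. The warped-product Hessian computation for $\overline{\nabla}^2\Lambda$ is where I would be most careful, since any error there propagates into both identities.
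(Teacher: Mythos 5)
Your proof is correct, and it takes essentially the same route as the source the paper relies on: the paper gives no argument of its own for Lemma \ref{supp}, deferring to Lemma 2.2 and Lemma 2.6 of \cite{Guan15} (see also \cite{Jin}), where the proofs run exactly as yours does — the conformal Killing identity $\overline{\nabla}V=\cosh r\,\overline{g}$ (equivalently $\overline{\nabla}^2\Lambda=\cosh r\,\overline{g}$), restriction to $M$ via the Gauss formula for the first identity, then the Weingarten relation and the Codazzi equation \eqref{Codazzi} for the Hessian of the support function. Your sign bookkeeping is also consistent with the paper's conventions (outward normal, $h_{ij}=-\langle\overline{\nabla}_{e_i}e_j,\nu\rangle$, so that geodesic spheres have $h^i_j=\coth r\,\delta^i_j$), which is the one place such a computation can silently go wrong.
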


See Lemma 2.2 and Lemma 2.6 in \cite{Guan15} or \cite{Jin} for the
proof.

\section{The a prior estimates}

In order to prove Theorem \ref{Main}, we use the degree theory for
nonlinear elliptic equation developed in \cite{Li89} and the proof
here is similar to \cite{Li-Ol, Jin, An, Li-Sh}. First, we consider
the family of equations for $0\leq t\leq 1$
\begin{eqnarray}\label{Eq2}
det^{\frac{1}{n}}(\widetilde{h}_{ij})=tf(x, r)+(1-t)\phi(r)(\coth
r-1)
\end{eqnarray}
and $\phi$ is a positive function which satisfies the following
conditions:

(a) $\phi(r)>0$;

(b) $\phi(r)>1$ for $r\leq r_1$;

(c) $\phi(r)<1$ for $r\geq r_2$;

(d) $\phi^{\prime}(r)<0$.

\subsection{$C^0$ Estimates}

Now, we can prove the following proposition which asserts that  the
solution of the equation \eqref{Eq} have uniform $C^0$ bound.

\begin{proposition}\label{C^0}
Under the assumptions \eqref{ASS1} and \eqref{ASS2} mentioned in
Theorem \ref{Main}, if the horo-convex hypersurface $M=\{(x, r(x)):
x \in \mathbb{S}^n\}\subset \mathbb{H}^{n+1}$ satisfies the equation
\eqref{Eq2} for a given $t \in [0, 1]$, then
\begin{eqnarray*}
r_1<r(x)<r_2, \quad \forall \ x \in \mathbb{S}^n.
\end{eqnarray*}
\end{proposition}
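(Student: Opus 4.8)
The plan is to prove $C^0$ estimates via the maximum principle applied at the extremal points of the radial function $r$ on the compact manifold $\mathbb{S}^n$. The key is that at a maximum of $r$ (equivalently of $u$), the gradient vanishes and the Hessian is non-positive, which forces the shifted Weingarten operator at that point to reduce to a diagonal, controllable quantity depending only on $r$; the assumptions \eqref{ASS1} and \eqref{ASS2} then rule out the extremum lying outside $[r_1, r_2]$.

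Concretely, suppose $r$ attains its maximum at $x_0 \in \mathbb{S}^n$, and work with the function $u$ defined in \eqref{02.01}. At $x_0$ we have $Du(x_0) = 0$, so $v = 1$ by \eqref{2.7}, and $D^2 u(x_0) \le 0$, hence $\widetilde{g}^{ij}(x_0) = \sigma^{ij}$. Substituting into \eqref{2.07}, at $x_0$ the Weingarten operator satisfies
\begin{eqnarray*}
h^i_j(x_0) = \frac{1}{\sinh r}\bigl(\cosh r\, \delta^i_j - \sigma^{ik} D_j D_k u\bigr) \ge \frac{\cosh r}{\sinh r}\,\delta^i_j = \coth r\, \delta^i_j,
\end{eqnarray*}
since $-\sigma^{ik} D_j D_k u \ge 0$. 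Therefore each principal curvature satisfies $\kappa_i(x_0) \ge \coth r(x_0)$, so the shifted principal curvatures obey $\lambda_i = \kappa_i - 1 \ge \coth r - 1$, and taking the product gives
\begin{eqnarray*}
\det{}^{\frac1n}(\widetilde{h}_{ij})(x_0) = \Bigl(\prod_i \lambda_i\Bigr)^{1/n} \ge \coth r(x_0) - 1.
\end{eqnarray*}
On the other hand, equation \eqref{Eq2} together with the defining properties of $\phi$ will give an upper bound for the right-hand side in terms of $\coth r - 1$ and $f$. If the maximum were attained at a point with $r(x_0) \ge r_2$, then by \eqref{ASS1} we have $f(x_0, r) \le \coth r - 1$, and by property (c) of $\phi$ the convex combination is bounded above by $\coth r - 1$, contradicting the strict inequality forced by the negative-definite part $-\sigma^{ik}D_jD_k u$ unless $r(x_0) < r_2$. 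The symmetric argument at a minimum point of $r$, using $D^2 u \ge 0$ hence $h^i_j \le \coth r\,\delta^i_j$, together with \eqref{ASS2} and property (b), yields $r(x_{\min}) > r_1$.

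The main obstacle is making the contradiction strict rather than merely non-strict: at the extremum the curvature inequality is $\ge \coth r$, and the assumptions \eqref{ASS1}–\eqref{ASS2} give the boundary comparisons only as non-strict inequalities $\coth r - 1 \ge f$ and $\coth r - 1 \le f$. One must therefore argue that equality throughout would force $D^2 u(x_0) = 0$ at the extremum and propagate to a global rigidity statement—or, more simply, combine the extremum comparison with the \emph{strict} monotonicity property (d) of $\phi$ and the openness of the range to push the extreme radius strictly inside $(r_1, r_2)$. Care is needed in handling the case $t = 1$, where the auxiliary function $\phi$ drops out and one relies purely on \eqref{ASS1}–\eqref{ASS2}; here the strictness must come from a careful bookkeeping of the sign of $D^2 u$ at the extremal point, which I expect to be the most delicate part of the argument.
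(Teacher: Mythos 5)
Your proposal is essentially the paper's own argument: the paper likewise evaluates \eqref{2.07} at the maximum point of $r$, uses the non-positivity of $D_iD_ju$ there (so $v=1$, $\widetilde{g}^{ij}=\sigma^{ij}$) to get $\det^{\frac{1}{n}}(\widetilde{h}_{ij})(x_0)\geq \coth r(x_0)-1$, and then plays this against \eqref{Eq2}, \eqref{ASS1} and property (c) of $\phi$, with the symmetric argument at the minimum using \eqref{ASS2} and property (b).

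On the strictness issue you flag: for $t<1$ it is settled exactly by your ``more simply'' alternative, and this is in effect what the paper does. Since $\phi(r)<1$ for $r\geq r_2$ and $\coth r-1>0$, the equation gives $tf\geq \bigl(1-(1-t)\phi\bigr)(\coth r-1)>t(\coth r-1)$ at $x_0$, hence $f>\coth r-1$ strictly when $0<t<1$, contradicting \eqref{ASS1}; at $t=0$ one contradicts property (c) directly. No rigidity or propagation argument is needed, and property (d) plays no role in this step (it is used only in the degree computation). At $t=1$, however, your concern is genuine and is not resolved by the paper either: its asserted strict inequality $f>\coth r-1$ does not follow there, one only gets $f(x_0,r(x_0))=\coth r(x_0)-1$, and no bookkeeping of the sign of $D^2u$ can repair this --- if $f(x,r)\equiv\coth r-1$, which satisfies \eqref{ASS1} and \eqref{ASS2} with equality, then every geodesic sphere is a horo-convex solution of \eqref{Eq2} with $t=1$, so solutions may even touch or leave $[r_1,r_2]$. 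Thus your proposal is correct to the same extent as the paper's proof; closing the case $t=1$ requires either strict inequalities in \eqref{ASS1}--\eqref{ASS2} or a separate limiting/perturbation argument, a point the paper glosses over.
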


\begin{proof}
Assume $r(x)$ attains its maximum at $x_0 \in \mathbb{S}^n$ and
$r(x_0)\geq r_2$, then recalling \eqref{2.07}
\begin{eqnarray*}
h^{i}_{j}=\frac{1}{v \sinh r }(\cosh
r\delta^{i}_{j}-\widetilde{g}^{ik}D_k D_ju),
\end{eqnarray*}
which implies together with the fact the matrix $D_iD_ju$ is
non-positive definite at $x_0$
\begin{eqnarray*}
h^{i}_{j}(x_0)-\delta^{i}_{j}=\frac{1}{\sinh r }(\cosh
r\delta^{i}_{j}-D^iD_ju)-\delta^{i}_{j}\geq(\coth
r-1)\delta^{i}_{j}.
\end{eqnarray*}
Thus, we have at $x_0$
\begin{eqnarray*}
det^{\frac{1}{n}}(\widetilde{h}_{ij})\geq (\coth r-1).
\end{eqnarray*}
So, we arrive at $x_0$
\begin{eqnarray*}
t f(x, r)+(1-t)\phi(r)(\coth r-1)\geq (\coth r-1).
\end{eqnarray*}
Thus, we obtain at $x_0$
\begin{eqnarray*}
f(x, r)> (\coth r-1),
\end{eqnarray*}
which is in contradiction with \eqref{ASS1}. Thus, we have $r(x)<
r_2$ for $x \in \mathbb{S}^n$. Similarly, we can obtain $r(x)> r_1$
for $x \in \mathbb{S}^n$.
\end{proof}

Now, we prove the following uniqueness result.

\begin{proposition}\label{Uni}
For $t=0$, there exists an unique horo-convex solution of the
equation \eqref{Eq2}, namely $M=\{(x, r(x)) \in \mathbb{H}^{n+1}:
r(x)=r_0\}$, where $r_0$ satisfies $\varphi(r_0)=1$.
\end{proposition}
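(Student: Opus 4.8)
The plan is to prove Proposition~\ref{Uni} in two parts: first establish that the sphere $\{r \equiv r_0\}$ with $\varphi(r_0)=1$ is indeed a solution at $t=0$, and then show it is the only horo-convex solution. For the existence direction, I would observe that when $t=0$ the equation \eqref{Eq2} reads $\det^{1/n}(\widetilde{h}_{ij}) = \phi(r)(\coth r - 1)$. On a geodesic sphere of radius $r_0$ we have $Du \equiv 0$, so $v \equiv 1$ and $D_iD_ju \equiv 0$; substituting into \eqref{2.07} gives $h^i_j = (\coth r_0)\delta^i_j$, hence every shifted principal curvature equals $\coth r_0 - 1$ and $\det^{1/n}(\widetilde{h}_{ij}) = \coth r_0 - 1$. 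Since $\varphi(r_0)=1$, the right-hand side also equals $\coth r_0 - 1$, so the sphere solves the equation. Note the sphere is horo-convex precisely because $\coth r_0 > 1$ for all finite $r_0 > 0$.

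The uniqueness direction is the substantive part. Here the idea is a maximum-principle comparison argument. Suppose $M = \{(x, r(x))\}$ is any horo-convex solution at $t=0$. At the maximum point $x_0$ of $r$ the Hessian $D_iD_ju$ is non-positive definite and $Du(x_0)=0$, so exactly as in the proof of Proposition~\ref{C^0} one gets $h^i_j(x_0) - \delta^i_j \geq (\coth r(x_0) - 1)\delta^i_j$, whence $\det^{1/n}(\widetilde{h}_{ij})(x_0) \geq \coth r(x_0) - 1$. Comparing with the equation $\det^{1/n}(\widetilde{h}_{ij}) = \phi(r)(\coth r - 1)$ at $x_0$ yields $\phi(r(x_0)) \geq 1$. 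Since $\varphi$ is strictly decreasing by condition (d) and $\varphi(r_0)=1$, this forces $r(x_0) \leq r_0$, i.e. $\max r \leq r_0$. A symmetric argument at the minimum point $x_1$, where $D_iD_ju$ is non-negative definite, reverses the inequality to give $\det^{1/n}(\widetilde{h}_{ij})(x_1) \leq \coth r(x_1) - 1$, hence $\phi(r(x_1)) \leq 1$ and therefore $\min r \geq r_0$. Combining $\max r \leq r_0 \leq \min r$ forces $r \equiv r_0$.

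The step I expect to require the most care is the inequality extraction at the extremal points, specifically justifying that $\det^{1/n}(\widetilde{h}_{ij}) \geq \coth r - 1$ at a maximum with equality characterization tight enough to drive the comparison. The eigenvalue bound $h^i_j - \delta^i_j \geq (\coth r - 1)\delta^i_j$ as a matrix inequality at $x_0$ gives each shifted principal curvature $\lambda_i \geq \coth r(x_0) - 1 > 0$, and since $\det^{1/n} = (\prod \lambda_i)^{1/n}$ is monotone in each $\lambda_i$ on the positive cone, the product bound follows cleanly; the horo-convexity guarantees we stay in the Garding cone where $\det^{1/n}$ is well-behaved. The only genuine subtlety is that the argument uses the strict monotonicity of $\varphi$ to convert the scalar inequalities $\phi(r(x_0)) \geq 1 \geq \phi(r(x_1))$ into the sandwiching of $r$ between its extrema and $r_0$; once that is in place the conclusion $r \equiv r_0$ is immediate, and no interior gradient estimate or full strong maximum principle on the PDE is needed.
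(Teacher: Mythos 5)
Your proposal is correct and takes essentially the same approach as the paper: evaluate at the maximum and minimum points of $r$, use the sign of $D_iD_ju$ there together with \eqref{2.07} to bound $\det^{1/n}(\widetilde{h}_{ij})$ by $\coth r - 1$ from below (resp.\ above), and then use the strict monotonicity of $\phi$ to force $r \equiv r_0$. The only difference is cosmetic: you compare $r_{\max}$ and $r_{\min}$ directly to $r_0$, while the paper compares $\varphi(r_{\min})$ and $\varphi(r_{\max})$ to each other, and you additionally verify explicitly that the geodesic sphere is a solution, which the paper leaves implicit.
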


\begin{proof}
Let $M$ be a solution of \eqref{Eq2},  for $t=0$
\begin{eqnarray*}
det^{\frac{1}{n}}(\widetilde{h}_{ij})-\phi(r)(\coth r-1)=0.
\end{eqnarray*}
Assume $r(x)$ attains its maximum $r_{max}$ at $x_0 \in
\mathbb{S}^n$, then we have at $x_0$
\begin{equation*}
h^{i}_{j}=\frac{1}{\sinh r}(\cosh r\delta^{i}_{j}-D^iD_ju),
\end{equation*}
which implies together with the fact the matrix $D_iD_ju$ is
non-positive definite at $x_0$
\begin{eqnarray*}
\det(\widetilde{h}_{ij})\geq (\coth r-1)^n.
\end{eqnarray*}
Thus, we have by the equation \eqref{Eq2}
\begin{eqnarray*}
\varphi(r_{max})\geq 1.
\end{eqnarray*}
Similarly,
\begin{eqnarray*}
\varphi(r_{min})\leq 1.
\end{eqnarray*}
Thus, since $\varphi$ is a decreasing function, we obtain
\begin{eqnarray*}
\varphi(r_{min})=\varphi(r_{max})=1.
\end{eqnarray*}
We conclude
\begin{eqnarray*}
r(x)=r_0
\end{eqnarray*}
for any  $(x, r(x)) \in M$, where $r_0$ is the unique solution of
$\varphi(r_0)=1$.
\end{proof}

\subsection{$C^1$ Estimates}

For the prescribed Weingarten curvature problem, it is need to
impose a sign condition for radial derivative of $f$ to derived the
a prior gradient estimate. However, such condition is not need here,
since the horo-convexity of the hypersurface in $\mathbb{H}^{n+1}$
automatically yields the bounded of the gradient of the radial
function.

\begin{proposition}\label{C^1}
If the horo-convex hypersurface $M=\{(x, r(x)): x \in
\mathbb{S}^n\}\subset \mathbb{H}^{n+1}$ satisfies \eqref{Eq2}, then
there exists a constant $C$ depending on the minimum and maximum
values of $r$ such that
\begin{eqnarray*}
|D r(x)|\leq C, \quad  \forall~ x \in \mathbb{S}^n.
\end{eqnarray*}
\end{proposition}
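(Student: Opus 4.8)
The plan is to show that horo-convexity \emph{alone}, without invoking the equation \eqref{Eq2}, already forces an upper bound on $v$, and hence on $|Du|$ and (via the $C^0$ estimates of Proposition \ref{C^0}) on $|Dr|$. Since $v^2=1+|Du|^2$ by \eqref{2.7} and $Dr=\sinh r\,Du$ with $r$ confined to $(r_1,r_2)$, it suffices to produce a uniform bound for $v$, equivalently for $w:=\frac{1}{2}|Du|^2$. This is precisely the mechanism alluded to in the remark preceding the proposition: no sign condition on the radial derivative of the right-hand side will be used.

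First I would let $x_0\in\mathbb{S}^n$ be a maximum point of $w$. If $Du(x_0)=0$ there is nothing to prove, so assume $Du(x_0)\neq0$ and choose an orthonormal frame for the round metric $\sigma$ at $x_0$ with $e_1$ pointing along $Du$, so that $D_1u=|Du|=\sqrt{v^2-1}$ and $D_au=0$ for $a\geq2$. The first-order condition $D_iw=D^lu\,D_iD_lu=0$ then reads $|Du|\,D_iD_1u=0$, whence $D_1D_ju=0$ for every $j$; in particular $D_1D_1u=0$.

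Next I would extract the bound from positive definiteness of the shifted second fundamental form $\widetilde{h}_{ij}=h_{ij}-g_{ij}$. Using the expressions for $h_{ij}$ and $g_{ij}$ recorded in the preceding lemma, one has
\begin{eqnarray*}
\widetilde{h}_{ij}=\sinh r\left[\left(\frac{\cosh r}{v}-\sinh r\right)(D_iu D_ju+\sigma_{ij})-\frac{1}{v}D_iD_ju\right].
\end{eqnarray*}
Evaluating this quadratic form on $e_1$ at $x_0$, where $D_1uD_1u+\sigma_{11}=(v^2-1)+1=v^2$ and $D_1D_1u=0$, collapses it to
\begin{eqnarray*}
\widetilde{h}_{11}(x_0)=\sinh r\,v\,(\cosh r-v\sinh r).
\end{eqnarray*}
Horo-convexity makes $\widetilde{h}_{ij}$ positive definite, so $\widetilde{h}_{11}(x_0)>0$, which forces $v(x_0)<\coth r(x_0)$, i.e. $|Du|(x_0)<1/\sinh r(x_0)$.

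Finally, since $r>r_1$ everywhere by Proposition \ref{C^0} and $\sinh$ is increasing, $|Du|(x_0)<1/\sinh r_1$; as $x_0$ maximizes $w$ this gives $|Du|\leq 1/\sinh r_1$ on all of $\mathbb{S}^n$, and therefore $|Dr|=\sinh r\,|Du|\leq \sinh r_2/\sinh r_1$, the desired estimate with $C$ depending only on $r_1,r_2$. The only real subtlety is the algebraic reduction of $\widetilde{h}_{11}$ at the critical point; the decisive simplification is that aligning the frame with $Du$ makes both the mixed and the pure second derivatives of $u$ in the gradient direction vanish, which is exactly what reduces the positivity of $\widetilde{h}_{11}$ to the elementary sign condition $\cosh r-v\sinh r>0$.
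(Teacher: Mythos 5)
Your proof is correct and is essentially the paper's own argument: both work at the maximum point of $|Du|^2$, use the first-order critical condition to kill the Hessian of $u$, and then test the positive definiteness of the shifted second fundamental form against the gradient direction to bound $v=\sqrt{1+|Du|^2}$ by a function of $r$ alone. The only difference is cosmetic --- you evaluate $\widetilde{h}_{11}$ in a $\sigma$-orthonormal frame aligned with $Du$, while the paper contracts $h^i_j>\delta^i_j$ with $D_iu$ and $g^{jk}D_ju$ --- and your cleaner bookkeeping in fact yields the sharp form $v<\coth r$ of the estimate (the paper's displayed intermediate constant contains a harmless typo, but either version gives the required $C^1$ bound).
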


\begin{proof}
Assume $x_0$ is the maximum value point of $|Du|^2(x)$. Thus, we
arrive at $x_0$,
\begin{eqnarray*}
\sigma^{kl}D_lu D_{k}D_ju=0
\end{eqnarray*}
for all $1\leq j\leq n$, which implies at $x_0$
\begin{eqnarray*}
D_iuh^{i}_{k}g^{jk}D_ju&=&\frac{D_iu}{v \sinh r }(\cosh
r\delta^{i}_{k}-\widetilde{g}^{il}D_l D_ku)\sinh^{-2}
r\frac{D^ku}{v^2}\\&=&\frac{1}{v^3 }\frac{\coth r }{\sinh^{3}
r}|Du|^2.
\end{eqnarray*}
in view of
\begin{equation*}
g^{jk}D_ju=\sinh^{-2} r\Big(\sigma^{jk}-\frac{D^ju
D^ku}{v^2}\Big)D_ju=\sinh^{-2} r\frac{D^ku}{v^2}.
\end{equation*}
Since $M$ is horo-convex, so $h^{i}_{j}>\delta^{i}_{j}$. Thus, we
obtain at $x_0$
\begin{equation*}
\frac{1}{v^3 }\frac{\coth r }{\sinh^{3} r}|Du|^2> \sinh^{-2}
r\frac{|Du|^2}{v^2},
\end{equation*}
which implies
\begin{equation*}
\frac{\coth r}{\sinh r}>\sqrt{1+|Du|^2}.
\end{equation*}
So, our proof is completed.
\end{proof}

\subsection{$C^2$ Estimates}

For convenience, we denote by
\begin{eqnarray*}
G(\widetilde{h}_{ij})\allowdisplaybreaks\notag=det^{\frac{1}{n}}(\widetilde{h}_{ij}),
\quad  \widetilde{f}(x, r)=t f(x, r)+(1-t)\phi(r)(\coth r-1),
\end{eqnarray*}
and
\begin{eqnarray*}
G^{ij}(\widetilde{h}_{ij})=\frac{\partial G}{\partial
\widetilde{h}_{ij}}, \quad G^{ij, r
s}(\widetilde{h}_{ij})=\frac{\partial^2 G}{\partial
\widetilde{h}_{ij}\partial\widetilde{h}_{rs}}.
\end{eqnarray*}

To estimate the second fundamental form of $M$, we need the
following two lemmas.
\begin{lemma}\label{C^2-2}
Let $M=\{(x, r(x)): x \in \mathbb{S}^n\}\subset \mathbb{H}^{n+1}$ be
a horo-convex solution of \eqref{Eq2}, then we have the following
equality
\begin{eqnarray*}
&&G^{ij}\langle V, \nu\rangle_{ij}+\langle V, \nu\rangle
G^{ij}h_{im}h^{m}_{j}=\sinh r\nabla_p\widetilde{ f}\nabla^{p}
r+\cosh r(\widetilde{f}+G^{ij}g_{ij}).
\end{eqnarray*}
\end{lemma}

\begin{proof}
We have by Lemma \ref{supp}
\begin{eqnarray*}
\langle V, \nu\rangle_{ij}=\cosh rh_{ij}+\sinh rg^{pq}h_{ij;
p}\nabla_{q} r-h_{im}h^{m}_{j}\langle V, \nu\rangle,
\end{eqnarray*}
which results in
\begin{eqnarray*}
G^{ij}\langle V, \nu\rangle_{ij}=\sinh rG^{ij}h_{ij; p}\nabla^{p}
r+\cosh r G^{ij}h_{ij}-\langle V,\nu\rangle G^{ij}h_{im}h^{m}_{j}.
\end{eqnarray*}
Differentiating the equation \eqref{Eq2} once, we have
$$G^{ij}h_{ij; p}=\nabla_p \widetilde{f},$$
which implies together with $G^{ij}h_{ij}=G+G^{ij}g_{ij}$
\begin{eqnarray*}
&&G^{ij}\langle V, \nu\rangle_{ij}+\langle V, \nu\rangle
G^{ij}h_{im}h^{m}_{j}=\sinh r\nabla_p \widetilde{f}\nabla^{p}
r+\cosh r(G+G^{ij}g_{ij}).
\end{eqnarray*}
Therefore we complete the proof.
\end{proof}

\begin{lemma}\label{f-2}
Let $M=\{(x, r(x)): x \in \mathbb{S}^n\}\subset \mathbb{H}^{n+1}$ be
a horo-convex solution of \eqref{Eq2}, then we have the following
equalities for $\widetilde{f}$
\begin{eqnarray*}
|\nabla\widetilde{f}|\leq C|\widetilde{f}|_{C^1}(1+|D r|)
\end{eqnarray*}
and
\begin{eqnarray*}
|\nabla^2\widetilde{f}|\leq C|\widetilde{f}|_{C^2}(1+|Dr|^2+H),
\end{eqnarray*}
where the constant $C$ depends on the minimum and maximum values of
$r$.
\end{lemma}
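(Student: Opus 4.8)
The plan is to prove both inequalities by a chain-rule computation, using the fact that the polar coordinates $(x,r)$ make $\widetilde f(x,r)$ a genuine function on the ambient space $\mathbb{H}^{n+1}=[0,+\infty)\times\mathbb{S}^n$, so its restriction to $M$ can be differentiated through the immersion $X(x)=(x,r(x))$; throughout, the coefficients of the metric and the ambient Christoffel symbols are bounded because $r$ stays in the fixed range $[r_{\min},r_{\max}]$, and such factors are absorbed into $C$. First I would handle the gradient estimate. Since $\nabla$ is the induced connection and $\widetilde f$ is scalar, the chain rule gives $\nabla_i\widetilde f=\widetilde f_{x^i}+\widetilde f_r\,r_i$, where $\widetilde f_{x^i}$ and $\widetilde f_r$ are the angular and radial partials, both bounded by $|\widetilde f|_{C^1}$. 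I would then compute $|\nabla\widetilde f|^2=g^{ij}\nabla_i\widetilde f\,\nabla_j\widetilde f$ using $g^{ij}=\sinh^{-2}r\,\widetilde g^{ij}$ with $\widetilde g^{ij}=\sigma^{ij}-D^iuD^ju/v^2\le\sigma^{ij}$ and $r_i=\sinh r\,D_iu$, so that $\sinh r\,|Du|=|Dr|$. Bounding $\widetilde g^{ij}$ by $\sigma^{ij}$ and absorbing $\sinh^{-1}r$ into $C$ yields $|\nabla\widetilde f|\le C|\widetilde f|_{C^1}(1+|Dr|)$.

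For the Hessian I would use the Gauss-type identity for the intrinsic Hessian of a restricted ambient function,
\[
\nabla_i\nabla_j\widetilde f=\overline{\nabla}^2\widetilde f(e_i,e_j)-h_{ij}\langle\overline{\nabla}\widetilde f,\nu\rangle,
\]
which follows from the Gauss formula $\overline{\nabla}_{e_i}e_j=\nabla_{e_i}e_j-h_{ij}\nu$ together with the sign convention $h_{ij}=-\langle\overline{\nabla}_{e_i}e_j,\nu\rangle$ of the paper. The two terms are estimated separately. For the first, expanding $e_i=\partial_i+r_i\partial_r$ gives
\[
\overline{\nabla}^2\widetilde f(e_i,e_j)=\overline{\nabla}^2\widetilde f(\partial_i,\partial_j)+r_i\,\overline{\nabla}^2\widetilde f(\partial_r,\partial_j)+r_j\,\overline{\nabla}^2\widetilde f(\partial_i,\partial_r)+r_ir_j\,\overline{\nabla}^2\widetilde f(\partial_r,\partial_r),
\]
and the components of $\overline{\nabla}^2\widetilde f$ in polar coordinates involve only $\widetilde f_{rr},\widetilde f_{rx^i},\widetilde f_{x^ix^j}$ and the ambient Christoffel symbols $\overline{\Gamma}^r_{ij}=-\sinh r\cosh r\,\sigma_{ij}$, $\overline{\Gamma}^i_{rj}=\coth r\,\delta^i_j$, $\overline{\Gamma}^k_{ij}=\Gamma^k_{ij}[\sigma]$, all bounded for $r$ in the fixed range. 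Measuring the resulting tensor in the $g$-norm (where each factor $r_i$ contributes a factor $|Dr|$ through $g^{ij}r_ir_j\le\sinh^{-2}r\,|Dr|^2$) bounds this first term by $C|\widetilde f|_{C^2}(1+|Dr|^2)$.

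For the second term I would bound $|\langle\overline{\nabla}\widetilde f,\nu\rangle|\le|\overline{\nabla}\widetilde f|\le C|\widetilde f|_{C^1}$, and here horo-convexity enters decisively: since $\kappa_i>1>0$, the shape operator is positive definite, so the $g$-norm of the second fundamental form satisfies $|h|_g=\big(\sum_i\kappa_i^2\big)^{1/2}\le\sum_i\kappa_i=H$. Hence the second term is bounded in $g$-norm by $C|\widetilde f|_{C^2}H$, and combining the two estimates gives $|\nabla^2\widetilde f|\le C|\widetilde f|_{C^2}(1+|Dr|^2+H)$, as claimed.

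The hard part, and the only place where geometry is used nontrivially, is this second fundamental form term: one must convert the unavoidable second-order-in-$r$ contribution into a quantity controlled by $H$. The Gauss identity isolates that contribution as exactly $h_{ij}\langle\overline{\nabla}\widetilde f,\nu\rangle$, and the sign $\kappa_i>0$ is precisely what upgrades the trivial bound $|h|_g\le\sqrt{n}\,\max_i\kappa_i$ to the sharp $|h|_g\le H$ needed here. If one prefers a purely coordinate computation avoiding the Gauss identity, the same term can be produced directly by inserting $\widetilde g^{ik}D_jD_ku=\cosh r\,\delta^i_j-v\sinh r\,h^i_j$ from \eqref{2.07} into the chain-rule expression for $\nabla_i\nabla_j\widetilde f$ (the induced Christoffel symbols carry the second derivatives of $u$); the bookkeeping is heavier, but the decisive estimate $|h|_g\le H$ via horo-convexity is identical.
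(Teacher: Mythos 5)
Your proof is correct. The gradient estimate is exactly the paper's argument (chain rule contracted with the explicit $g^{ij}$), but for the Hessian you take a genuinely different decomposition. The paper stays intrinsic: it expands
\begin{equation*}
\nabla_p\nabla_q\widetilde{f}=\widetilde{f}_{pq}+\widetilde{f}_{pr}\,r_q+\widetilde{f}_{qr}\,r_p+\widetilde{f}_{rr}\,r_pr_q+\widetilde{f}_{r}\,\nabla_p\nabla_q r,
\end{equation*}
so that the only second-order geometric quantity is the intrinsic Hessian of $r$, and it controls that by Lemma \ref{supp}: since $\nabla_i\nabla_j\Lambda=\cosh r\,g_{ij}-h_{ij}\langle V,\nu\rangle$ with $\Lambda=\cosh r-1$, one gets $\sinh r\,\nabla_i\nabla_j r=\cosh r\,g_{ij}-\cosh r\,\nabla_i r\nabla_j r-h_{ij}\langle V,\nu\rangle$, hence $|\nabla^2 r|\le C(1+|Dr|^2+H)$ using positivity of $h_{ij}$ and $|\langle V,\nu\rangle|\le\sinh r$. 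You instead split off the ambient Hessian via the Gauss identity $\nabla_i\nabla_j\widetilde{f}=\overline{\nabla}^2\widetilde{f}(e_i,e_j)-h_{ij}\langle\overline{\nabla}\widetilde{f},\nu\rangle$ and bound $\overline{\nabla}^2\widetilde{f}$ by hand with the warped-product Christoffel symbols. The two routes are equivalent in content---Lemma \ref{supp} is precisely your Gauss identity applied to the particular ambient function $\Lambda$, whose ambient Hessian is $\cosh r\,\overline{g}$---and both turn on the same decisive step, $|h|_g\le H$ from horo-convexity (positivity of the shape operator). The paper's route is more economical, since Lemma \ref{supp} is already on hand and no Christoffel symbols need to be computed; yours is more general and self-contained, working for any ambient $C^2$ function on any warped product and making explicit that the only geometric inputs are the $C^0$ and $C^1$ bounds on $r$ and the sign of the curvatures. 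Your version also supplies the final assembly that the paper leaves implicit: its proof ends with the bound on $|\nabla^2 r|$ (miswritten with a factor $|f|_{C^2}$ where a constant should stand) rather than concluding the stated bound for $|\nabla^2\widetilde{f}|$ itself.
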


\begin{proof}
A direct calculation implies
\begin{eqnarray*}
\nabla_p\widetilde{f}=\widetilde{f}_{p}+\widetilde{f}_{r}r_p.
\end{eqnarray*}
Thus, we have by noticing that
$g^{ij}=\sinh^{-2}r(\sigma^{ij}-\frac{D^iu D^j u}{v^2})$
\begin{eqnarray*}
|\nabla\widetilde{f}|^2=g^{pq}\widetilde{f}_{p}\widetilde{f}_{q}+\widetilde{f}^{2}_{r}r_pr_qg^{pq}\leq
C|\widetilde{f}|^{2}_{C^1}(1+|D r|^2).
\end{eqnarray*}
Moreover, we have
\begin{eqnarray*}
\nabla_p\nabla_p\widetilde{f}=\widetilde{f}_{pp}+2\widetilde{f}_{pr}r_p+\widetilde{f}_{rr}(r_p)^2
+\widetilde{f}_{r}r_{pp},
\end{eqnarray*}
and we know from Lemma \ref{supp}
\begin{eqnarray*}
\sinh r\nabla_i\nabla_j r+\cosh r\nabla_i r \nabla_j r=\cosh
rg_{ij}-h_{ij}\langle V, \nu\rangle.
\end{eqnarray*}
Thus, we arrive
\begin{eqnarray*}
|\nabla^2 r|\leq C|f|_{C^2}(1+|Dr|^2+H).
\end{eqnarray*}
\end{proof}

Now we begin to estimate the second fundamental form.

\begin{proposition}\label{C^2}
If the horo-convex hypersurface $M=\{(x, r(x)): x \in
\mathbb{S}^n\}\subset \mathbb{H}^{n+1}$ satisfies \eqref{Eq2} and
$f(x, r)\in C^\infty(\mathbb{S}^{n}\times [0, +\infty))$ is a
positive function, then there exists a constant $C$, depending on
$n$, $|f|_{C^2}$ and $|r|_{C^1(\mathbb{S}^n)}$ such that
\begin{equation*}
|\kappa_{i}(p)|\le C, \quad \forall ~ p \in M, 1\leq i\leq n,
\end{equation*}
where $\kappa_i$ is the principal curvature of $M$.
\end{proposition}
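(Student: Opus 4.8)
The plan is to establish a uniform upper bound on the principal curvatures $\kappa_i$ by bounding the largest \emph{shifted} principal curvature $\lambda_{\max} = \kappa_{\max}-1$ via a maximum-principle argument applied to a carefully chosen auxiliary function. Since the operator $G = \det^{1/n}(\widetilde h_{ij})$ is a symmetric function of the shifted Weingarten matrix and the equation \eqref{Eq2} forces $G$ to stay positive and bounded, an upper bound on $\lambda_{\max}$ together with $\det \widetilde h = \widetilde f^{\,n}$ automatically yields a positive \emph{lower} bound on each $\lambda_i$ (hence horo-convexity is preserved), so the whole curvature vector is pinched. The standard device, following \cite{Guan15, Jin}, is to consider the test function
\begin{eqnarray*}
W(X) = \log \lambda_{\max}(X) + \Phi(\langle V,\nu\rangle) + \Psi(r),
\end{eqnarray*}
or more robustly a quantity of the form $\log(\max_i \widetilde h^i_i) - \log\langle V,\nu\rangle + A\Lambda(r)$, where $A$ is a large constant to be fixed and $\langle V,\nu\rangle$ is bounded below by the $C^0$ and $C^1$ estimates (Propositions \ref{C^0} and \ref{C^1}). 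First I would assume $W$ attains an interior maximum at a point $p_0$, rotate coordinates so that $\widetilde h_{ij}$ is diagonal there with $\widetilde h_{11}$ the largest eigenvalue, and replace $\log\lambda_{\max}$ by the smooth quantity $\log\widetilde h_{11}$ near $p_0$.

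Next I would differentiate $W$ twice at $p_0$, using the critical-point condition $\nabla W = 0$ and the inequality $G^{ij}\nabla_i\nabla_j W \le 0$ coming from ellipticity of $G^{ij}$ (which is positive definite since $\widetilde h$ is positive definite). The main analytic inputs are: the commutator/Simons-type identity \eqref{2rd} to convert $G^{ij}\nabla_i\nabla_j h_{11}$ into curvature terms involving $\nabla^2 G = \widetilde f$ plus lower-order pieces; the already-established evolution identity for the support function in Lemma \ref{C^2-2}, namely
\begin{eqnarray*}
G^{ij}\langle V,\nu\rangle_{ij} + \langle V,\nu\rangle\,G^{ij}h_{im}h^m_j = \sinh r\,\nabla_p\widetilde f\,\nabla^p r + \cosh r\,(\widetilde f + G^{ij}g_{ij});
\end{eqnarray*}
and the gradient/Hessian bounds on $\widetilde f$ from Lemma \ref{f-2}. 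The concavity of $G=\det^{1/n}$ in its argument is crucial: the second-derivative term $G^{rs,pq}\nabla_1 \widetilde h_{rs}\nabla_1\widetilde h_{pq}$ has a favorable sign and can be used to absorb or control the bad gradient terms $G^{ij}\widetilde h_{11i}\widetilde h_{11j}/\widetilde h_{11}^2$ that arise from differentiating $\log\widetilde h_{11}$.

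The hard part will be the bookkeeping of the third-order terms: after commuting covariant derivatives one obtains terms of the type $G^{ij}\nabla_1\nabla_1 h_{ij}$ and $G^{ij,rs}$ contracted against $\nabla_1 h$, and controlling these requires combining the concavity inequality for $\det^{1/n}$ with the support-function identity so that the genuinely dangerous terms cancel. I expect to choose the constant $A$ in front of $\Lambda(r)$ large enough that the term $A\,\cosh r\,G^{ij}g_{ij}$ (which is positive and of the right order since $G^{ij}g_{ij}\ge c\,\mathrm{tr}(\widetilde h)^{\text{power}}$ for the normalized determinant) dominates the remaining linear-in-$\widetilde h_{11}$ contributions. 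The horo-convexity ($\lambda_i>0$) and the $C^1$ bound guaranteeing $\langle V,\nu\rangle \ge c_0 > 0$ are what make this work without any sign assumption on $f_r$. Concluding, at $p_0$ the inequality $0 \ge G^{ij}\nabla_i\nabla_j W$ becomes, after discarding favorably-signed terms, an estimate of the form $c\,\widetilde h_{11} \le C(1 + \widetilde h_{11}^{1/2} + \cdots)$ forcing $\widetilde h_{11}(p_0) \le C$, and since $W(p_0)$ controls $W$ everywhere and the other summands are bounded, this yields the uniform bound $\kappa_i = \lambda_i + 1 \le C$ on all of $M$.
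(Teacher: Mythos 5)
Your proposal is correct and follows essentially the same route as the paper's own proof: a maximum-principle argument for $\log(\text{curvature quantity})-\log\langle V,\nu\rangle$ at an interior maximum, combining the once- and twice-differentiated equation, the Simons identity \eqref{2rd}, the support-function identity of Lemma \ref{C^2-2}, the $\widetilde f$-bounds of Lemma \ref{f-2}, and the concavity of $\det^{1/n}$, with the quadratic term $G^{ij}h_{im}h^{m}_{j}$ (of size comparable to $(\widetilde f+G^{ij}g_{ij})\,\kappa_{\max}$) supplying the dominant good term. The only cosmetic difference is that the paper tracks $\log H$ instead of $\log\lambda_{\max}$ — since horo-convexity gives $\kappa_{\max}\le H$, bounding $H$ suffices, and this choice avoids both the non-smoothness of the largest eigenvalue (your $\widetilde h_{11}$ chart device) and the auxiliary term $A\Lambda(r)$, which turns out to be unnecessary.
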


\begin{proof}
Since $M$ is horo-convex, we only need to estimate the mean
curvature $H$ of $M$. Taking the auxillary function
$$W(X)=\log H-\log\langle V, \nu\rangle.$$ Assume that $p_0$ is the
maximum point of $W$. Then at $p_0,$
\begin{equation}\label{C2-1}
0=W_{i}=\frac{H_i}{H}-\frac{\langle V, \nu\rangle_i}{\langle V,
\nu\rangle}
\end{equation}
and
\begin{equation}\label{C2-2}
0\geq W_{ij}=\frac{H_{ij}}{H}-\frac{\langle V,
\nu\rangle_{ij}}{\langle V, \nu\rangle}.
\end{equation}
Choosing a suitable coordinate $\{x^1, x^2, ..., x^n\}$ on the
neighborhood of $p_0 \in M$ such that the matrix
$g_{ij}(p_0)=\delta_{ij}$ and $\{h_{ij}\}$ is diagonal at $p_0$.
This implies at $p_0$
\begin{equation*}
0\geq G^{ij}W_{ij}=\sum_{l=1}^{n}\frac{1}{H}G^{ii}h_{ll;
ii}-\frac{G^{ii}\langle V, \nu\rangle_{ii}}{\langle V, \nu\rangle}.
\end{equation*}
From \eqref{2rd}, we obtain
\begin{eqnarray*}
H_{ii}=\Delta h_{ii}-(h^{2}_{ii}+1)H+(|A|^2+n)h_{ii},
\end{eqnarray*}
which results in at $p_0$
\begin{eqnarray*}
0&\geq&\frac{1}{H}\sum_{l=1}^{n}G^{ii}h_{ii;
ll}-G^{ii}(h^{2}_{ii}+1)+G^{ii}h_{ii}\frac{|A|^2+n}{H}-\frac{G^{ii}\langle
V, \nu\rangle_{ii}}{\langle V, \nu\rangle}.
\end{eqnarray*}
Differentiating the equation \eqref{Eq2} twice, we have
\begin{eqnarray}\label{D-1}
G^{ij}h_{ij; l}=\nabla_l\widetilde{f}
\end{eqnarray}
and
\begin{eqnarray}\label{D-2}
G^{ij,rs}h_{ij; l}h_{rs; l} +G^{ij}h_{ij;
ll}=\nabla_l\nabla_l\widetilde{f}.
\end{eqnarray}
We know from Lemma \ref{f-2}
\begin{eqnarray}\label{D-3}
|\nabla_l\widetilde{f}|\leq C, \quad
|\nabla_l\nabla_l\widetilde{f}|\leq C(1+H).
\end{eqnarray}
Combining \eqref{D-1}, \eqref{D-2} and \eqref{D-3}, we arrive at
$p_0$ by Lemma \ref{C^2-2} and the inequality $|A|^2\geq
\frac{H^2}{n}$
\begin{eqnarray*}
0&\geq&\frac{1}{H}\sum_{l=1}^{n}\nabla_l\nabla_l\widetilde{f}-G^{ii}(h^{2}_{ii}+1)+G^{ii}h_{ii}\frac{|A|^2+n}{H}-\frac{G^{ii}\langle
V, \nu\rangle_{ii}}{\langle V, \nu\rangle}\\&\geq&
-C\frac{H+1}{H}-G^{ij}g_{ij}+G^{ij}h_{ij}\frac{|A|^2+n}{H}\\&&-\frac{\sinh
r}{\langle V, \nu\rangle}\nabla_l \widetilde{f}\nabla^{l}
r-\frac{\cosh r}{\langle V, \nu\rangle}G^{ij}h_{ij}\\&\geq&
-C\frac{H+1}{H}-G^{ij}g_{ij}+(\widetilde{f}+G^{ij}g_{ij})\frac{|H|^2+n^2}{n
H}\\&&-C-CG^{ij}g_{ij}\\&\geq& C H-C,
\end{eqnarray*}
if we assume $H$ is big enough, otherwise our proposition holds
true. So we can derive $H\le C$ at $p_0$. So, our proof is
completed.
\end{proof}

\section{The proof of  Theorem  \ref{Main}}

In this section, we can use the degree theory for nonlinear elliptic
equation developed in \cite{Li89} to prove Theorem \ref{Main}. We
only sketch will be given below, see \cite{Li-Ol, An, Jin, Li-Sh}
for details.

After establishing the  a priori estimates Proposition \ref{C^0},
Proposition \ref{C^1} and Proposition \ref{C^2}, we know that the
equation \eqref{Eq2} is uniformly elliptic. From \cite{Eva82,
Kry83}, and Schauder estimates, we have
\begin{eqnarray}\label{C2+}
|r|_{C^{4,\alpha}(\mathbb{S}^n)}\leq C
\end{eqnarray}
for any horo-convex solution $M$ to the equation \eqref{Eq}. We
define
\begin{eqnarray*}
C_{0}^{4,\alpha}(\mathbb{S}^n)=\bigg\{r \in
C^{4,\alpha}(\mathbb{S}^n): M=\{(x, r(x)): x\in \mathbb{S}^n\} \
\mbox{is}
 \ \mbox{horo-convex}\bigg\}.
\end{eqnarray*}
Let us consider $$F(\cdot, t):
C_{0}^{4,\alpha}(\mathbb{S}^n)\rightarrow
C^{2,\alpha}(\mathbb{S}^n),$$ which is defined by
\begin{eqnarray*}
F(r, t)=det^{\frac{1}{n}}(\widetilde{h}_{ij})-t f(x,
r)-(1-t)\phi(r)(\coth r-1).
\end{eqnarray*}
Let $$\mathcal{O}_R=\{r \in C_{0}^{4,\alpha}(\mathbb{S}^n):
|r|_{C^{4,\alpha}(\mathbb{S}^n)}<R\},$$ which clearly is an open set
of $C_{0}^{4,\alpha}(\mathbb{S}^n)$. Moreover, if $R$ is
sufficiently large, $F(r, t)=0$ has no solution on $\partial
\mathcal{O}_R$ by the a prior estimate established in \eqref{C2+}.
Therefore the degree $\deg(F(\cdot, t), \mathcal{O}_R, 0)$ is
well-defined for $0\leq t\leq 1$. Using the homotopic invariance of
the degree, we have
\begin{eqnarray*}
\deg(F(\cdot, 1), \mathcal{O}_R, 0)=\deg(F(\cdot, 0), \mathcal{O}_R,
0).
\end{eqnarray*}
Proposition \ref{Uni} shows that $r=r_0$ is the unique solution to
the above equation for $t=0$. Direct calculation show that
\begin{eqnarray*}
F(sr_0, 0)=[1-\varphi(sr_0)](\coth sr_0-1).
\end{eqnarray*}
Using the fact $\varphi(r_0)=1$, we have
\begin{eqnarray*}
\delta_{r_0}F(r_0, 0)=\frac{d}{d s}|_{s=1}F(sr_0,
0)=-\varphi^{\prime}(r_0)(\coth r_0-1)>0,
\end{eqnarray*}
where $\delta F(r_0, 0)$ is the linearized operator of $F$ at $r_0$.
Clearly, $\delta F(r_0, 0)$ takes the form
\begin{eqnarray*}
\delta_{w}F(r_0, 0)=-a^{ij}w_{ij}+b^i
w_i-\varphi^{\prime}(r_0)(\coth r_0-1) w,
\end{eqnarray*}
where $a^{ij}$ is a positive definite matrix. Since
$-\varphi^{\prime}(r_0)(\coth r_0-1)>0,$ thus $\delta F(r_0, 0)$ is
an invertible operator. Therefore,
\begin{eqnarray*}
\deg(F(\cdot, 1), \mathcal{O}_R, 0)=\deg(F(\cdot, 0), \mathcal{O}_R,
0)=\pm 1.
\end{eqnarray*}
So, we obtain at least a solution at $t=1$. This completes the proof
of Theorem \ref{Main}.

\bigskip

\bigskip

\end{document}